\newtheorem{thm}{Theorem}
\newtheorem{cor}[thm]{Corollary}
\newtheorem{lem}[thm]{Lemma}
\newtheorem{prop}[thm]{Proposition}
\theoremstyle{definition}
\newtheorem*{theorem*}{Theorem}
\theoremstyle{remark}
\numberwithin{equation}{section}
\newcommand{\To}{\longrightarrow}
\def\<{\langle}
\def\>{\rangle}
\begin{document}
	
\title[]{Complex symmetry and cyclicity of composition operators on $H^2(\mathbb{C}_+)$}

\author{S. Waleed Noor and Osmar R. Severiano}%

\address{IMECC, Universidade Estadual de Campinas, Campinas-SP, Brazil.}

\email{$\mathrm{waleed@ime.unicamp.br}$(1st author),$\mathrm{osmar.rrseveriano@gmail.com}$(2nd author).}

\begin{abstract} In this article, we completely characterize the complex symmetry, cyclicity and hypercyclicity of composition operators $C_\phi f=f\circ\phi$ induced by affine self-maps $\phi$ of the right half-plane $\mathbb{C}_+$ on the Hardy-Hilbert space $H^2(\mathbb{C}_+)$. The interplay between complex symmetry and cyclicity plays a key role in the analysis. We also provide new proofs for the normal, self-adjoint and unitary cases and for an adjoint formula discovered by Gallardo-Guti\'{e}rrez and Montes-Rodr\'{i}guez.
\end{abstract}

\subjclass[2010]{Primary 47B33, 47A16, 47B32}
\keywords{Complex symmetry, cyclicity, composition operator, Hardy space.}
\maketitle{}

\section*{Introduction}

A bounded operator $T$ on a separable Hilbert space $\mathcal{H}$ is \emph{complex symmetric} if there exists an orthonormal basis for $\mathcal{H}$ with respect to which $T$ has a self-transpose matrix representation. An equivalent definition also exists. A \emph{conjugation} is a conjugate-linear operator $C:\mathcal{H}\to\mathcal{H}$ that satisfies the conditions \\

(a) $C$ is \emph{isometric}: $\langle Cf,Cg \rangle=\langle g,f \rangle$ $\forall$ $f,g\in\mathcal{H}$,\\

(b) $C$ is \emph{involutive}: $C^2=I$.\\ \\
We say that $T$ is $C$-\emph{symmetric} if $CT=T^*C$, and complex symmetric if there exists a conjugation $C$ with respect to which $T$ is $C$-symmetric.

Complex symmetric operators on Hilbert spaces are natural generalizations of complex symmetric matrices, and their general study was initiated by
Garcia, Putinar, and Wogen (\cite{Garc2},\cite{Garc3},\cite{Wogen1},\cite{Wogen2}). The class of complex symmetric operators includes a large number of concrete examples including all normal operators.

An operator $T$ on $\mathcal{H}$ is said to be \emph{cyclic} if there exists a vector $f\in\mathcal{H}$ for which the linear span of its orbit $(T^nf)_{n\in\mathbb{N}}$ is dense in $\mathcal{H}$. If the orbit $(T^nf)_{n\in\mathbb{N}}$ itself is dense in $\mathcal{H}$, then $T$ is said to be \emph{hypercyclic}. In these cases $f$ is called a \emph{cyclic} or \emph{hypercyclic} vector for $T$ respectively.
If we assume that $T$ is both complex symmetric and cyclic (hypercyclic), then the relation $CT=T^*C$ implies that $T^*$ must also be cyclic (hypercyclic). The conjugation $C$ acts as a bijection between cyclic (hypercyclic) vectors of $T$ and $T^*$. Two monographs \cite{Bayart-Matheron} and \cite{Linear Chaos} on the  dynamics of linear operators have appeared recently. 

If $X$ is a Banach space of holomorphic functions on an open set $U\subset\mathbb{C}$ and if $\phi$ is a holomorphic self-map of $U$, the \emph{composition operator} with \emph{symbol} $\phi$ is defined by $C_\phi f=f\circ\phi$ for any $f\in X$. The emphasis here is on the comparison of  properties of $C_\phi$ with those of the symbol $\phi$. 

If $X$ is the Hardy space $H^2(\mathbb{C}_+)$ of the open right half-plane $\mathbb{C}_+$, then a holomorphic self-map $\phi$ of $\mathbb{C}_+$ induces a bounded  $C_\phi$ on $H^2(\mathbb{C}_+)$ if and only if $\phi$ has a finite angular derivative at the fixed point $\infty$. That is, if $\phi(\infty)=\infty$ and if the non-tangential limit
\begin{equation}\label{Ang. Der. at infinity}
\phi'(\infty):=\lim_{w\to\infty}\frac{w}{\phi(w)}
\end{equation}
exists and is finite. This was proved by Matache in \cite{Matache weighted}. Then Elliott and Jury \cite{Eliot Jury} prove that the norm of $C_\phi$ on $H^2(\mathbb{C}_+)$ is given by $|| C_{\phi}||=\sqrt{{\phi'}(\infty)}$.
Matache \cite{Matache comp halfplane} also showed that the only  linear fractional selfmaps of $\mathbb{C}_+$ that induce bounded composition operators on $H^2(\mathbb{C}_+)$ are the \emph{affine maps}

\begin{equation}\label{LFT symbol}
\phi(w)=aw+b
\end{equation}
where $a>0$ and Re$(b)\geq 0$. In this case, $C_\phi$ is normal on $H^2(\mathbb{C}_+)$ if and only if $\phi(w)=aw+b$ with $a=1$ or Re$(b)= 0$. This was first proved by Gallardo-Guti\'{e}rrez and Montes-Rodr\'{i}guez \cite{Eva Adjoints Dirichlet} and then again with a different proof by Matache \cite{Matache Inv and normal}. The authors of \cite{Eva Adjoints Dirichlet} work with the upper half-plane $\Pi$ whereas \cite{Matache Inv and normal} is concerned with the right half-plane $\mathbb{C}_+$. Hence the necessary translation of results must be made.
The study of complex symmetry of composition operators on the Hardy-Hilbert space of the unit disk $H^2(\mathbb{D})$ was initiated by Garcia and Hammond \cite{Garc1}. They showed that involutive disk automorphisms induce \emph{non-normal} complex symmetric composition operators. Then Narayan, Sievewright and Thompson \cite{Narayan Thompson CS CO} discovered non-automorphic symbols with the same property. The general problem in the disk case is far from being solved.  On the other hand the cyclity and hypercyclicity phenomena  for composition operators in the linear fractional disk case have been characterized (see \cite{Bourdon-Shapiro Cyclic Phenomena} and \cite{Eva role of spectrum}). The objective here is to characterize the complex symmetry, cyclicity  and hypercyclicity of $C_\phi$ in the linear fractional half-plane case. The interplay between complex symmetry and linear dynamics will play a key role in our analysis.

The plan of the paper is as follows. In Section 1, after some preliminaries, we provide a different proof of the adjoint formula for linear fractional composition operators first discovered by Gallardo-Guti\'{e}rrez and Montes-Rodr\'{i}guez \cite{Eva Adjoints Dirichlet}. This is used to give new and shorter proofs for the normal, self-adjoint and unitary cases. In Section 2 we characterize complex symmetry of $C_\phi$ on $H^2(\mathbb{C}_+)$. In particular we show that these are precisely the normal ones. In Section 3 we consider the cyclicity of $C_\phi$ proving that this occurs only when $\phi$ is a non-automorphism with no fixed points in $\mathbb{C}_+$.  Finally in Section 4 we prove that $H^2(\mathbb{C}_+)$ supports no hypercyclic  linear fractional $C_\phi$. Our main results are summarized in the following table.  \\

\begin{tabular}{@{}|l|l|l|l|@{}}
	\toprule
	Symbol $\phi(w)=aw+b$                              & Comp. Symmetric $C_\phi$& Cyclic $C_\phi$& Hypercyclic $C_\phi$\\ \midrule
	$\mathrm{Re}(b)=0$                          &    \ \ \ \ \ \ \ \  \ \ \ \ \  $\checkmark$     &    \ \ \ \ \    \text{\sffamily X}    &       \ \ \ \ \ \ \ \    \text{\sffamily X}           \\ \midrule
	$a=1$ \& $\mathrm{Re}(b)>0$          &        \ \ \ \ \ \ \ \  \ \ \ \ \  $\checkmark$          &    \ \ \ \ \   $\checkmark$      &        \ \ \ \ \ \ \ \    \text{\sffamily X}           \\ \midrule
	$a<1$ \& $\mathrm{Re}(b)>0$  &   \ \ \ \ \ \ \ \  \ \ \ \ \   \text{\sffamily X}          &      \ \ \ \ \    \text{\sffamily X}      &         \ \ \ \ \ \ \ \    \text{\sffamily X}          \\ \midrule
	$a>1$ \& $\mathrm{Re}(b)>0$ &    \ \ \  \ \ \ \ \  \ \ \ \ \     \text{\sffamily X}             &     \ \ \ \ \    $\checkmark$    &           \ \ \ \ \ \ \ \    \text{\sffamily X}        \\ \bottomrule
\end{tabular} \\ \\
 \section{Preliminaries}
\subsection{The Hardy space $H^2(\mathbb{C}_+)$} Let $\mathbb{C}_+$ be the open right half-plane. The Hardy space $H^2(\mathbb{C}_+)$ is the Hilbert space of analytic functions on $\mathbb{C}_+$ for which the norm 
 \[
 ||f||^2_2=\sup_{0<x<\infty}\int_{-\infty}^\infty|f(x+iy)|^2dy
 \]
is finite. For each $\alpha\in\mathbb{C}_+$, let $k_\alpha$ denote the \emph{reproducing kernel} for $H^2(\mathbb{C}_+)$ at $\alpha$; that is,
\[
k_\alpha(w)=\frac{1}{w+\bar{\alpha}}.
\]
These kernels satisfy the fundamental relation $\langle f,k_\alpha\rangle=f(\alpha)$ for all $f\in H^2(\mathbb{C}_+)$. If $\phi$ is a holomorphic self-map of $\mathbb{C}_+$, then a simple computation gives
\begin{equation}\label{eq: adjoint on kernel}
C_\phi^*k_\alpha=k_{\phi(\alpha)}
\end{equation}
for each $\alpha\in\mathbb{C}_+$.		
		
\subsection{Affine composition operators} The linear fractional self-maps $\phi$ of $\mathbb{C}_+$ that induce bounded composition operators on $H^2(\mathbb{C}_+)$ are the affine maps
\begin{equation}\label{LFT symbol2}
\phi(w)=aw+b
\end{equation}
where $a>0$ and Re$(b)\geq 0$. Such a map $\phi$ is said to be of \emph{parabolic type} if $a=1$ and is a \emph{parabolic automorphism} if additionally Re$(b)= 0$. Similarly $\phi$ is of \emph{hyperbolic type} if $a\neq 1$ and is a \emph{hyperbolic automorphism} if additionally Re$(b)=0$. Gallardo-Guti\'{e}rrez and Montes-Rodr\'{i}guez \cite[Theorem 7.1]{Eva Adjoints Dirichlet} proved a formula for the adjoint of such affine composition operators. We provide a short proof of this result.

\begin{prop} \label{LFT Adjoint}If $\phi$ is as in \eqref{LFT symbol2}, then $C_\phi^*=a^{-1}C_\psi$, where $
	\psi(w)=a^{-1}w+a^{-1}\bar{b}.
	$
	\end{prop}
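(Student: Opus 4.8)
The plan is to verify the identity by testing both sides against the reproducing kernels $k_\alpha$, $\alpha\in\mathbb{C}_+$, whose linear span is dense in $H^2(\mathbb{C}_+)$: if $f\perp k_\alpha$ for every $\alpha$, then $f(\alpha)=\langle f,k_\alpha\rangle=0$ for all $\alpha\in\mathbb{C}_+$, so $f\equiv 0$. Since $C_\phi$ is bounded, so is $C_\phi^*$; and since $\psi(w)=a^{-1}w+a^{-1}\bar b$ has $a^{-1}>0$ and $\mathrm{Re}(a^{-1}\bar b)=a^{-1}\mathrm{Re}(b)\ge 0$, the map $\psi$ is again of the admissible form \eqref{LFT symbol2}, so $C_\psi$ and hence $a^{-1}C_\psi$ are bounded on $H^2(\mathbb{C}_+)$. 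Consequently it is enough to check that $C_\phi^*k_\alpha=a^{-1}C_\psi k_\alpha$ for each $\alpha\in\mathbb{C}_+$; the two bounded operators must then coincide on all of $H^2(\mathbb{C}_+)$.

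For the left-hand side I would invoke the elementary relation \eqref{eq: adjoint on kernel}, namely $C_\phi^*k_\alpha=k_{\phi(\alpha)}$, which gives
\[
(C_\phi^*k_\alpha)(w)=\frac{1}{w+\overline{\phi(\alpha)}}=\frac{1}{w+a\bar\alpha+\bar b},
\]
using $\overline{\phi(\alpha)}=\overline{a\alpha+b}=a\bar\alpha+\bar b$ (recall $a>0$). For the right-hand side, a direct substitution yields
\[
(C_\psi k_\alpha)(w)=k_\alpha(\psi(w))=\frac{1}{\psi(w)+\bar\alpha}=\frac{1}{a^{-1}w+a^{-1}\bar b+\bar\alpha}=\frac{a}{w+\bar b+a\bar\alpha},
\]
and multiplying by $a^{-1}$ reproduces exactly $(C_\phi^*k_\alpha)(w)$. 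This completes the verification on kernels and hence the proof.

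There is essentially no serious obstacle: once one exploits the action \eqref{eq: adjoint on kernel} of $C_\phi^*$ on kernels, the argument collapses to a one-line algebraic identity. The only points demanding a little care are (i) confirming that $\psi$ is an admissible symbol so that $a^{-1}C_\psi$ is a genuine bounded operator to compare against, and (ii) tracking the complex conjugations — in particular, $\bar b$ (rather than $b$) appears in $\psi$ precisely because $\overline{\phi(\alpha)}$ introduces $\bar b$. An alternative route — pairing $C_\phi f$ with $g$ directly through the boundary-integral formula for the $H^2(\mathbb{C}_+)$ inner product and changing variables along the vertical line — also works and exhibits the factor $a^{-1}$ as a Jacobian, but the kernel computation above is shorter and cleaner.
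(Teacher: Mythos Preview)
Your proof is correct and follows essentially the same approach as the paper: both arguments verify the identity on reproducing kernels and conclude by their completeness in $H^2(\mathbb{C}_+)$. The only cosmetic difference is that the paper computes $C_\phi k_\alpha$ and recognizes it as $a^{-1}C_\psi^*k_\alpha$ (then takes adjoints), whereas you compute $C_\phi^*k_\alpha$ directly via \eqref{eq: adjoint on kernel} and match it with $a^{-1}C_\psi k_\alpha$; your explicit check that $\psi$ is again an admissible symbol is a nice touch the paper leaves implicit.
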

		\begin{proof} We observe that for each $w\in \mathbb{C}^{+}$, we have
		\begin{align*}
			(C_{\phi}k_\alpha)(w)&=
			\frac{1}{a w+b+\overline{\alpha}}
			=\frac{1}{a \left( w+a^{-1}\overline{\alpha}+a^{-1} b\right) }
			= \frac{1}{a\left( w+\overline{\psi(\alpha)}\right) } \\
			&=a^{-1}k_{\psi(\alpha)}(w)=(a^{-1}C^*_\psi k_\alpha)(w).
		\end{align*}	
			The completeness of the reproducing kernels $(k_\alpha)_{\alpha\in\mathbb{C}_+}$ in $H^2(\mathbb{C}_{+})$ implies that $C_\phi=a^{-1}C^*_\psi$, or equivalently $C_\phi^*=a^{-1}C_\psi$.
			\end{proof}
		
		Proposition \ref{LFT Adjoint} allows us to obtain new and shorter proofs for the normal, self-adjoint and unitary composition operators (see Theorems 2.4, 3.1 and 3.4 of \cite{Matache Inv and normal}).
	
	\begin{thm} \label{normal, unitary, self-adjoint}Let $\phi(w)=aw+b$ with $a>0$ and Re$(b)\geq 0$. Then 
		\begin{enumerate}
		\item $C_\phi$ is normal if and only if $a=1$ or Re$(b)=0$,
		\item $C_\phi$ is self-adjoint if and only if $a=1$ and $b\geq 0$, 
		\item $C_\phi$ is unitary if and only if $a=1$ and Re$(b)=0$.
		\end{enumerate}
		\end{thm}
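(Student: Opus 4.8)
The plan is to reduce everything to the adjoint formula in Proposition~\ref{LFT Adjoint}, which gives $C_\phi^* = a^{-1}C_\psi$ with $\psi(w) = a^{-1}w + a^{-1}\bar b$. The key algebraic observation is that composition operators compose contravariantly: $C_\phi C_\eta = C_{\eta\circ\phi}$. So for normality I would compute both $C_\phi^* C_\phi = a^{-1}C_\psi C_\phi = a^{-1}C_{\phi\circ\psi}$ and $C_\phi C_\phi^* = a^{-1}C_\phi C_\psi = a^{-1}C_{\psi\circ\phi}$, then compare the two affine symbols $\phi\circ\psi$ and $\psi\circ\phi$. A direct computation shows $\phi\circ\psi(w) = \phi(a^{-1}w + a^{-1}\bar b) = w + b + \bar b = w + 2\,\mathrm{Re}(b)$, while $\psi\circ\phi(w) = \psi(aw+b) = w + a^{-1}b + a^{-1}\bar b = w + 2a^{-1}\mathrm{Re}(b)$. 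Since $C_\phi$ is injective (its symbol is not constant), $C_\phi$ is normal iff $C_{\phi\circ\psi} = C_{\psi\circ\phi}$ iff these two symbols agree, i.e. $2\,\mathrm{Re}(b) = 2a^{-1}\mathrm{Re}(b)$, which forces $\mathrm{Re}(b) = 0$ or $a = 1$. This proves (1).

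For the self-adjoint case (2), $C_\phi = C_\phi^*$ means $C_\phi = a^{-1}C_\psi$. Applying both sides to the reproducing kernel $k_\alpha$ and using \eqref{eq: adjoint on kernel} together with the kernel computation already performed in the proof of Proposition~\ref{LFT Adjoint}, self-adjointness is equivalent to $\phi = \psi$ as self-maps (again using injectivity/completeness of kernels); that is, $aw + b = a^{-1}w + a^{-1}\bar b$ for all $w$, which gives $a = a^{-1}$ (hence $a = 1$ since $a > 0$) and $b = \bar b$ (hence $b \in \mathbb{R}$, so $b \geq 0$ given $\mathrm{Re}(b) \geq 0$). Conversely these conditions clearly give $\psi = \phi$. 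For the unitary case (3), the cleanest route is to observe that a unitary operator is in particular normal, so by (1) we may assume $a = 1$ or $\mathrm{Re}(b) = 0$; then one shows directly that when $a = 1$ and $\mathrm{Re}(b) > 0$ the operator $C_\phi$ fails to be surjective (or has norm issues) so cannot be unitary, while when $\mathrm{Re}(b) = 0$ the map $\phi$ is an automorphism of $\mathbb{C}_+$ with inverse $\phi^{-1}(w) = a^{-1}w - a^{-1}b$, and $C_{\phi^{-1}} = C_\phi^{-1}$; combining with the adjoint formula $C_\phi^* = a^{-1}C_\psi$ and noting $\psi = \phi^{-1}$ precisely when $a = 1$ (since $\psi(w) = a^{-1}w + a^{-1}\bar b = a^{-1}w - a^{-1}b$ iff $\bar b = -b$ iff $\mathrm{Re}(b) = 0$, automatically here, while the norm constraint $\|C_\phi\| = \sqrt{a^{-1}\cdot\text{something}}$... ) one checks $C_\phi^* = C_\phi^{-1}$ forces $a = 1$. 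Let me restate (3) more carefully: using $\|C_\phi\| = \sqrt{\phi'(\infty)} = \sqrt{1/a}$ (stated in the introduction), a unitary operator has norm $1$, which immediately forces $a = 1$; then with $a = 1$, unitarity plus self-adjointness considerations or a direct check that $\phi$ must be an automorphism yields $\mathrm{Re}(b) = 0$, and conversely $a = 1, \mathrm{Re}(b) = 0$ makes $\phi$ a parabolic automorphism whose composition operator is readily seen to be unitary.

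The main obstacle I anticipate is not in (1), which is essentially bookkeeping once the contravariance and injectivity facts are in place, but in pinning down the converse directions and the unitary case with minimal fuss. For (3) in particular one wants to avoid re-deriving half-plane automorphism theory; the slickest argument uses the norm formula $\|C_\phi\| = \sqrt{1/a}$ from the introduction to kill the $a \neq 1$ possibility instantly, reducing the unitary question to the parabolic-type family $\phi(w) = w + b$, where $C_\phi^* = C_\psi$ with $\psi(w) = w + \bar b$, so $C_\phi$ is unitary iff $C_\psi C_\phi = I = C_\phi C_\psi$ iff $\phi\circ\psi = \psi\circ\phi = \mathrm{id}$, i.e. $w + b + \bar b = w$, i.e. $\mathrm{Re}(b) = 0$. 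A minor technical point to handle cleanly throughout is the justification that $C_\phi = C_\eta$ implies $\phi = \eta$ for non-constant symbols, and that $C_\phi$ is injective; both follow from evaluating on reproducing kernels and their completeness, exactly as in the proof of Proposition~\ref{LFT Adjoint}, so no new machinery is needed.
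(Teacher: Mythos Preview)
Your argument for (1) is exactly the paper's: compute $\phi\circ\psi(w)=w+2\,\mathrm{Re}(b)$ and $\psi\circ\phi(w)=w+2a^{-1}\mathrm{Re}(b)$ and compare. For (2) you are close to the paper but there is a small slip: from $C_\phi=a^{-1}C_\psi$ you jump to ``$\phi=\psi$ as self-maps'', but the scalar $a^{-1}$ is still present, so this equivalence is not automatic. The paper handles this by applying $C_\phi=a^{-1}C_\psi$ to a single kernel $k_1$, obtaining $\psi(w)+1=a^{-1}\phi(w)+a^{-1}$, and then reading off \emph{both} $a=1$ and $b=\bar b$ from that one equation; your route can be repaired the same way (varying $\alpha$ in $k_\alpha$ forces $a=1$ first, after which $\phi=\psi$ follows), but as written the step is underjustified.

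For (3) you take a genuinely different path from the paper. The paper writes $C_{\phi\circ\psi}=aI$, tests it on $k_1$, and extracts $a=1$ and $\mathrm{Re}(b)=0$ simultaneously from $\frac{1}{w+2\,\mathrm{Re}(b)+1}=\frac{a}{w+1}$. You instead invoke the norm formula $\|C_\phi\|=\sqrt{1/a}$ to kill $a\neq1$ up front, and then in the parabolic case check $\phi\circ\psi=\mathrm{id}$ directly. Your approach is perfectly valid and arguably more conceptual (it explains \emph{why} $a=1$ is forced), at the cost of importing the Elliott--Jury norm result; the paper's approach stays entirely within the elementary kernel calculus of Proposition~\ref{LFT Adjoint}. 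Either is fine; just be aware that your write-up of (3) wanders before settling on the clean argument in your final paragraph---trim it to that.
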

	\begin{proof}By Proposition \ref{LFT Adjoint}, the operator $C_\phi$ is normal if and only if $C_\phi C_\psi=C_\psi C_\phi$. This is equivalent to the equality $\phi\circ\psi=\psi\circ \phi$. For $w\in\mathbb{C}_+$, we have
		\begin{equation}\label{phi o psi}
		(\phi\circ\psi)(w)=a(a^{-1}w+a^{-1}\bar{b})+b=w+2\mathrm{Re}(b)
		\end{equation}
		and similarly
		\[
		(\psi\circ\phi)(w)=a^{-1}(aw+b)+a^{-1}\bar{b}=w+2a^{-1}\mathrm{Re}(b).
		\]
		Therefore $\phi\circ\psi=\psi\circ \phi$  $\Longleftrightarrow$ $(1-a^{-1})\mathrm{Re}(b)=0$ $\Longleftrightarrow$ $a=1$ or Re$(b)=0$. Similarly $C_\phi$ is self-adjoint $\Longleftrightarrow$ $C_\phi=C_\phi^*=a^{-1}C_\psi$. If we apply this operator equality to the reproducing kernel $k_1(w)=\frac{1}{w+1}$, we get 
		\begin{align*}
		&\frac{1}{\phi(w)+1}=\frac{a^{-1}}{\psi(w)+1} \Longleftrightarrow \psi(w)+1=a^{-1}\phi(w)+a^{-1} \\ 
		&\Longleftrightarrow a^{-1}w+a^{-1}\bar{b}+1=w+a^{-1}b+a^{-1}.
		\end{align*}
		The last equality holds precisely when $a=1$ and $b\geq 0$. Finally if $C_\phi $ is unitary then $a^{-1}C_\phi C_\psi=I=a^{-1}C_\psi C_\phi$ and in particular $C_{\phi\circ\psi}=aI$. Applying the latter identity to $k_1$ and using \eqref{phi o psi} gives
		\[
		\frac{1}{w+2\mathrm{Re}(b)+1}=\frac{a}{w+1}\Longleftrightarrow w+1=aw+2a\mathrm{Re}(b)+a
		\]
		which clearly holds precisely when $a=1$ and Re$(b)=0$.
		\end{proof}
	
	In the next section we address the first main theme of this work which is the complex symmetry of composition operators on $H^2(\mathbb{C}_+)$.
	
	\section{Complex Symmetry of $C_\phi$} According to Theorem \ref{normal, unitary, self-adjoint}, an affine composition operator $C_\phi$ on $H^2(\mathbb{C}_+)$ is normal if and only if $\phi$ is an automorphism or a parabolic non-automorphism. Since normal operators are complex symmetric, it follows that to characterize all composition operators that are complex symmetric we must consider the hyperbolic \emph{non-automorphisms}. These are precisely the symbols 
	\[
	\phi(w)=aw+b \ \ \mathrm{with}  \  a\in (0,1)\cup(1,\infty) \ \mathrm{and}  \  \mathrm{Re}(b)>0.
	\]
	 In this case we shall say $\phi$ is of \emph{type $\mathrm{I}$} if $a\in(0,1)$ and of \emph{type} $\mathrm{II}$ if $a\in(1,\infty)$. The corresponding composition operators $C_\phi$ shall also be called type $\mathrm{I}$ and $\mathrm{II}$ respectively. Note that according to Proposition \ref{LFT Adjoint} the adjoint of each $C_\phi$ of type $\mathrm{I}$ is a  scalar multiple of a $C_\psi$ of type $\mathrm{II}$ and vice versa. Hence if one of them is both  complex symmetric and cyclic, then so must be the other. So  to show that $C_\phi$ is \emph{not} complex symmetric when $\phi$ is a hyperbolic non-automorphism, it is enough to prove that type $\mathrm{I}$ composition operators are \emph{not} cyclic whereas those of type $\mathrm{II}$ \emph{are} cyclic.
	 \subsection{Symbols of type $\mathrm{I}$} Bourdon and Shapiro \cite[Proposition 2.7]{Bourdon-Shapiro Cyclic Phenomena}  proved that if the adjoint of a bounded linear operator $T$ on a Hilbert space has a multiple eigenvalue, then $T$ is not cyclic. Let $\psi(w)=aw+b$ with $a\in(1,\infty)$ and Re$(b)>0$, in which case $\psi$ is of type $\mathrm{II}$. For each complex $\lambda$ define the function
	 \[
	 f_\lambda(w)=\left(w+\frac{b}{a-1}\right)^\lambda
	 \]
	 which is holomorphic in $\mathbb{C}_+$  since $\mathrm{Re}(\frac{b}{a-1})>0$, and $f_\lambda\in H^2(\mathbb{C}_+)$ if and only if Re$(\lambda)<-1/2$. Hence for Re$(\lambda)<-1/2$, we see that 
	 \[
	 C_\psi f_\lambda(w)=\left(aw+b+\frac{b}{a-1}\right)^\lambda=\left(aw+\frac{ab}{a-1}\right)^\lambda=a^\lambda f_\lambda(w).
	 \]
	 This implies that each such $a^\lambda$ is an eigenvalue of infinite multiplicity since 
	 \[
	 C_\psi f_{\lambda+\frac{2\pi n }{\log a}i}=a^{\lambda+\frac{2\pi n }{\log a}i} f_{\lambda+\frac{2\pi n }{\log a}i}=a^{\lambda } f_{\lambda+\frac{2\pi n }{\log a}i}
	 \]
	 for each integer $n$. We note that in the proof of \cite[Theorem 7.4]{Eva Adjoints Dirichlet} it is claimed that $(z-b)^\lambda$ is an eigenvector for $C_\psi$ which is clearly incorrect. It follows that $C_\psi^*$ is not cyclic. But type ${\mathrm{I}}$ operators are scalar multiples of the adjoints of those of type ${\mathrm{II}}$. Therefore type ${\mathrm{I}}$ operators are \emph{not} cyclic.
	 \begin{prop}\label{type I cyclicity}If $\phi$ is a hyperbolic non-automorphism of type $\mathrm{I}$, then $C_\phi$ is not cyclic. 
	 	\end{prop}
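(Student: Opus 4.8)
The plan is to assemble, in order, the ingredients laid out in the paragraph preceding the statement. Fix $\phi(w)=aw+b$ with $a\in(0,1)$ and $\mathrm{Re}(b)>0$. By Proposition \ref{LFT Adjoint}, $C_\phi^*=a^{-1}C_\psi$ where $\psi(w)=a^{-1}w+a^{-1}\bar b$; taking adjoints and using that $a^{-1}$ is real and positive gives $C_\phi=a^{-1}C_\psi^*$. Since $a^{-1}>1$ and $\mathrm{Re}(a^{-1}\bar b)=a^{-1}\mathrm{Re}(b)>0$, the symbol $\psi$ is a hyperbolic non-automorphism of type $\mathrm{II}$. Because a vector $f$ is cyclic for an operator $T$ if and only if it is cyclic for $cT$ with $c\neq 0$ (the spans of the two orbits coincide), it suffices to prove that $C_\psi^*$ is not cyclic.

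Next I would produce a multiple eigenvalue for $C_\psi$. Set $\beta=\bar b/(1-a)$, which lies in $\mathbb{C}_+$ since $\mathrm{Re}(\beta)=\mathrm{Re}(b)/(1-a)>0$, and check the identity $\psi(w)+\beta=a^{-1}(w+\beta)$. Hence for the holomorphic function $f_\lambda(w)=(w+\beta)^\lambda$, with the branch chosen so that $w+\beta\in\mathbb{C}_+$, one gets $C_\psi f_\lambda=(a^{-1})^\lambda f_\lambda$. A standard estimate on vertical lines shows $f_\lambda\in H^2(\mathbb{C}_+)$ precisely when $\mathrm{Re}(\lambda)<-1/2$. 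Replacing $\lambda$ by $\lambda+\tfrac{2\pi i n}{\log(a^{-1})}$ for $n\in\mathbb{Z}$ leaves $\mathrm{Re}(\lambda)$ unchanged and multiplies the eigenvalue by $e^{2\pi i n}=1$; the functions $f_{\lambda+2\pi i n/\log(a^{-1})}$ being linearly independent, $(a^{-1})^\lambda$ is an eigenvalue of $C_\psi$ of infinite multiplicity whenever $\mathrm{Re}(\lambda)<-1/2$.

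To finish, $(C_\psi^*)^*=C_\psi$ has an eigenvalue of multiplicity at least two, so by the criterion of Bourdon and Shapiro \cite[Proposition 2.7]{Bourdon-Shapiro Cyclic Phenomena} the operator $C_\psi^*$ is not cyclic. By the scalar-invariance of cyclicity noted above, $C_\phi=a^{-1}C_\psi^*$ is not cyclic either.

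The only point needing genuine care is the membership criterion $f_\lambda\in H^2(\mathbb{C}_+)\iff\mathrm{Re}(\lambda)<-1/2$: one estimates $\int_{-\infty}^{\infty}|(x+iy+\beta)^\lambda|^2\,dy$ uniformly in $x>0$, using $|(x+iy+\beta)^\lambda|=|x+iy+\beta|^{\mathrm{Re}(\lambda)}e^{-\mathrm{Im}(\lambda)\arg(x+iy+\beta)}$ and the fact that $\arg(x+iy+\beta)$ ranges in a bounded subinterval of $(-\pi/2,\pi/2)$. Everything else — the identity $\psi(w)+\beta=a^{-1}(w+\beta)$, the linear independence of the $f_\lambda$, and the scalar-invariance of cyclicity — is routine bookkeeping, so I do not anticipate a real obstacle.
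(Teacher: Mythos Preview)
Your proposal is correct and follows essentially the same route as the paper: exhibit eigenvectors $f_\lambda(w)=(w+\beta)^\lambda$ for the type~$\mathrm{II}$ operator $C_\psi$, show the eigenvalue has infinite multiplicity by shifting $\lambda$ along the imaginary axis in steps of $2\pi/\log(a^{-1})$, invoke Bourdon--Shapiro to conclude $C_\psi^*$ is not cyclic, and transfer this to $C_\phi$ via $C_\phi=a^{-1}C_\psi^*$. The only cosmetic difference is that you start from the type~$\mathrm{I}$ symbol $\phi$ and build the specific adjoint symbol $\psi$, whereas the paper argues for a generic type~$\mathrm{II}$ symbol first and then invokes the adjoint relation; the content is identical.
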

	 
	 \subsection{Symbols of type $\mathrm{II}$} Let $\psi(w)=aw+b$ with $a\in(0,1)$ and Re$(b)>0$. Hence $\psi$ is of type $\mathrm{I}$. It is easy to see that $\psi$ has a fixed point $w=\frac{b}{1-a}$ which belongs to $\mathbb{C}_+$. Now since type $\mathrm{II}$ operators are scalar multiples of adjoints of type $\mathrm{I}$ operators, the following general result suffices to show that all type $\mathrm{II}$ operators are cyclic.
	 \begin{prop} Let $\psi$ be an analytic self-map of $\mathbb{C}_+$ with $\psi(\alpha)=\alpha$ for some $\alpha\in\mathbb{C_+}$ such that $C_\psi$ is bounded on $H^2(\mathbb{C}_+)$. Then $C^*_\psi$ is cyclic.
	 	\end{prop}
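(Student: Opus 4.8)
The plan is to produce an explicit cyclic vector for $C_\psi^*$, namely a reproducing kernel $k_{w_0}$ at a cleverly chosen point $w_0\in\mathbb{C}_+$. The engine is formula \eqref{eq: adjoint on kernel}: since $C_\psi^*k_w=k_{\psi(w)}$, iteration gives $(C_\psi^*)^nk_w=k_{\psi_n(w)}$, where $\psi_n=\psi\circ\cdots\circ\psi$ ($n$ factors, $\psi_0=\mathrm{id}$). Hence the closed linear span of the orbit $\{(C_\psi^*)^nk_{w_0}:n\ge0\}$ equals the closed linear span of $\{k_{\psi_n(w_0)}:n\ge0\}$, and by the reproducing property $\langle f,k_{\psi_n(w_0)}\rangle=f(\psi_n(w_0))$, so a function $f\in H^2(\mathbb{C}_+)$ is orthogonal to this orbit precisely when $f$ vanishes at every $\psi_n(w_0)$. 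Thus it suffices to choose $w_0$ so that the sequence $(\psi_n(w_0))_{n\ge0}$ assumes infinitely many distinct values and has an accumulation point in $\mathbb{C}_+$: the identity theorem then forces $f\equiv0$, and $k_{w_0}$ is cyclic for $C_\psi^*$.

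Two points remain to verify. First, I would observe that the forward orbit of \emph{any} point stays in a compact subset of $\mathbb{C}_+$: by the Schwarz--Pick lemma and $\psi(\alpha)=\alpha$, the hyperbolic distance $\rho$ on $\mathbb{C}_+$ satisfies $\rho(\psi_n(w_0),\alpha)=\rho(\psi_n(w_0),\psi_n(\alpha))\le\rho(w_0,\alpha)$ for all $n$, so the orbit lies in a hyperbolic ball about $\alpha$, which is relatively compact in $\mathbb{C}_+$; consequently any infinite orbit automatically clusters at a point of $\mathbb{C}_+$. Second, I would show that some $w_0$ has an infinite forward orbit. We may assume $\psi\neq\mathrm{id}$ (for $\psi=\mathrm{id}$ one has $C_\psi=I$, trivially non-cyclic, and this degenerate symbol does not occur in our applications). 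Since $C_\psi$ is bounded, $\psi(\infty)=\infty$; then no iterate $\psi_k$ can equal the identity, for otherwise $\psi$ would be an automorphism of $\mathbb{C}_+$ fixing $\infty$, hence an affine map $w\mapsto aw+ib_0$ with $a>0$, $b_0\in\mathbb{R}$, and such a map is periodic only if it is the identity. In particular each $\psi_k$ is non-constant, so the set $P=\bigcup_{k\ge1}\{w\in\mathbb{C}_+:\psi_k(w)=w\}$ of periodic points is a countable union of zero sets of non-zero holomorphic functions, hence countable, and the set of eventually periodic points $\bigcup_{n\ge0}\psi_n^{-1}(P)$ is countable as well. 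As $\mathbb{C}_+$ is uncountable, there is $w_0\in\mathbb{C}_+$ that is not eventually periodic, i.e.\ with $\{\psi_n(w_0):n\ge0\}$ infinite.

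Combining the two points completes the argument: for such a $w_0$ the set $\{\psi_n(w_0):n\ge0\}$ is infinite and accumulates at a point of $\mathbb{C}_+$, so any $f\in H^2(\mathbb{C}_+)$ orthogonal to $\{(C_\psi^*)^nk_{w_0}:n\ge0\}$ vanishes on a set with a limit point inside $\mathbb{C}_+$ and is therefore identically zero; hence $k_{w_0}$ is a cyclic vector for $C_\psi^*$. The only genuinely delicate step is securing a point with infinite forward orbit, which is where the structural input — that a bounded $C_\psi$ rules out $\psi$ being a non-trivial periodic Möbius automorphism — is used; everything else is the reproducing-kernel orbit identity \eqref{eq: adjoint on kernel} together with Schwarz--Pick.
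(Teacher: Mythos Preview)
Your argument is correct and shares the paper's core mechanism: iterate \eqref{eq: adjoint on kernel} to get $(C_\psi^*)^n k_{w_0}=k_{\psi_n(w_0)}$, then force $f\equiv 0$ via the identity theorem once the orbit $\{\psi_n(w_0)\}$ is infinite and accumulates inside $\mathbb{C}_+$. Where you diverge from the paper is in how you secure those two properties. The paper transfers to the disk via the Cayley map, observes that boundedness of $C_\psi$ together with an interior fixed point forces $\psi$ to be a non-automorphism, and then invokes the Denjoy--Wolff theorem (for convergence $\psi_n\to\alpha$) plus a lemma of Worner (for distinctness of the iterates at every $w\neq\alpha$). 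You instead stay in $\mathbb{C}_+$, use Schwarz--Pick to trap every orbit in a hyperbolic ball about $\alpha$ (giving accumulation), and then run a cardinality argument---no iterate $\psi_k$ can be the identity, so periodic and eventually periodic points form a countable set---to produce \emph{some} $w_0$ with infinite orbit. Your route is more self-contained (no Denjoy--Wolff, no external citation), but it yields a weaker conclusion: the paper shows that \emph{every} $k_w$ with $w\neq\alpha$ is a cyclic vector, whereas you only establish the existence of one. Both proofs tacitly exclude the identity map (for which $C_\psi^*=I$ is not cyclic); you flag this explicitly, while the paper's claim that the affine automorphism $w\mapsto w+ir$ has no fixed point when $a=1$ quietly overlooks the case $r=0$.
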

 	\begin{proof} We first note that $\psi$ cannot be an automorphism of $\mathbb{C}_+$. Indeed, if $\psi$ were an automorphism then we must have $\psi(w)=aw+ir$ with $a>0$ and $r\in\mathbb{R}$ and it is easy to see that $\psi$ has no fixed point when $a=1$ and the purely imaginary fixed point $\frac{ir}{1-a}$ when $a\neq 1$. Now let $\gamma(z)=\frac{1+z}{1-z}$ be the \emph{Cayley transform} of the open unit disk $\mathbb{D}$ onto $\mathbb{C}_+$. Hence $\Psi=\gamma^{-1}\circ\phi\circ\gamma$ is a non-automorphic self-map of $\mathbb{D}$ with an interior fixed point $\beta=\gamma^{-1}(\alpha)\in\mathbb{D}$. The well-known \emph{Denjoy-Wolff Theorem} says that the  composition iterates $\Psi^{[n]}\To\beta$ locally uniformly in $\mathbb{D}$ as $n\to\infty$. It also follows that the iterates $\Psi^{[n]}(z)$ are distinct for each $z\neq \beta$ (see \cite[Lemma 1]{Worner}).  Hence $\psi^{[n]}\To\alpha$ locally uniformly in $\mathbb{C}_+$ as $n\to\infty$ and $\psi^{[n]}(w)$ is a sequence of distinct points for each $w\neq\alpha$. Our goal is to prove that each reproducing kernel $k_w$ for $w\neq\alpha$ is a cyclic vector for $C^*_\psi$. Suppose $f\in H^2(\mathbb{C}_+)$ is orthogonal to $(C_\psi^*)^n k_w$ for all $n\in\mathbb{N}$. Then
 		\[
 		0=\left\langle f,(C_{\psi}^*)^nk_w\right\rangle =\left\langle C^n_{\psi}f,k_w\right\rangle =\left\langle f\circ \psi^{[n]},k_w\right\rangle =f(\psi^{[n]}(w))
 		\]
 		implies that $f$ vanishes on a sequence of distinct points with limit $\alpha$ in $\mathbb{C}_+$. Hence $f\equiv 0$ and $k_w$ is a cyclic vector for $C_\psi^*$ for each $w\neq\alpha$.
 		\end{proof}
 	This concludes the proof of the cyclicity of type $\mathrm{II}$ composition operators.
 	 \begin{cor}\label{type II cyclicity}If $\phi$ is a hyperbolic non-automorphism of type $\mathrm{II}$, then $C_\phi$ is cyclic.
 	\end{cor}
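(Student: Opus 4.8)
The plan is to deduce the corollary directly from Proposition~\ref{LFT Adjoint} together with the proposition immediately preceding this corollary. Write $\phi(w)=aw+b$ with $a\in(1,\infty)$ and $\mathrm{Re}(b)>0$. First I would invoke Proposition~\ref{LFT Adjoint} to obtain $C_\phi^{*}=a^{-1}C_\psi$, where $\psi(w)=a^{-1}w+a^{-1}\bar b$; since $a^{-1}\in(0,1)$ and $\mathrm{Re}(a^{-1}\bar b)=a^{-1}\mathrm{Re}(b)\geq 0$, this $\psi$ is an admissible affine symbol of type $\mathrm{I}$, so $C_\psi$ is bounded on $H^2(\mathbb{C}_+)$. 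Taking adjoints of the identity $C_\phi^{*}=a^{-1}C_\psi$ and using that $a^{-1}$ is real gives $C_\phi=a^{-1}C_\psi^{*}$, so $C_\phi$ is a nonzero scalar multiple of $C_\psi^{*}$.

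Next I would observe that $\psi$ has a fixed point inside $\mathbb{C}_+$: solving $\psi(w)=w$ yields $w=\dfrac{a^{-1}\bar b}{1-a^{-1}}=\dfrac{\bar b}{a-1}$, whose real part is $\dfrac{\mathrm{Re}(b)}{a-1}>0$. Hence $\psi$ satisfies the hypotheses of the preceding proposition (an analytic self-map of $\mathbb{C}_+$ with an interior fixed point inducing a bounded composition operator), and that proposition gives that $C_\psi^{*}$ is cyclic; in fact every reproducing kernel $k_w$ with $w\neq \frac{\bar b}{a-1}$ is a cyclic vector for $C_\psi^{*}$.

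Finally I would note that cyclicity is unaffected by multiplication by a nonzero scalar: if $g$ is a cyclic vector for an operator $T$, then since $(cT)^{n}g=c^{n}T^{n}g$ with $c\neq 0$, the linear spans of $\{(cT)^{n}g\}_{n\geq 0}$ and $\{T^{n}g\}_{n\geq 0}$ coincide, hence have the same closure, so $g$ is also cyclic for $cT$. Applying this with $T=C_\psi^{*}$ and $c=a^{-1}$ shows that $C_\phi=a^{-1}C_\psi^{*}$ is cyclic. I do not expect any real obstacle here: the substantive work is entirely contained in the preceding proposition, and the only things that need checking are the elementary facts that $\psi$ is of type $\mathrm{I}$ with an interior fixed point and that nonzero scalar multiples preserve cyclic vectors.
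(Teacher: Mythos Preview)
Your proposal is correct and follows essentially the same approach as the paper: the corollary is deduced by observing that a type $\mathrm{II}$ operator $C_\phi$ is a nonzero scalar multiple of $C_\psi^*$ for a type $\mathrm{I}$ symbol $\psi$ with an interior fixed point, and then invoking the preceding proposition. The only cosmetic slip is that you write $\mathrm{Re}(a^{-1}\bar b)\geq 0$ where the strict inequality $>0$ (needed for $\psi$ to be of type~$\mathrm{I}$) is immediate since $\mathrm{Re}(b)>0$.
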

 	Therefore Proposition \ref{type I cyclicity} and Corollary \ref{type II cyclicity} imply that $C_\phi$ is \emph{not} complex symmetric when $\phi$ is a hyperbolic non-automorphism. We therefore obtain a characterization for complex symmetry of affine composition operators.
\begin{thm} Let $\phi(w)=aw+b$ be a self-map of $\mathbb{C}_+$. Then $C_\phi$ is complex symmetric on $H^2(\mathbb{C}_+)$ if and only if $a=1$ or Re$(b)=0$. That is, precisely when $C_\phi$ is normal. 
\end{thm}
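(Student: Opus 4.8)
The plan is to split the statement into its two implications, the first being immediate and the second reducing to the cyclicity dichotomy already established. For the ``if'' direction, suppose $a=1$ or $\mathrm{Re}(b)=0$. Then Theorem~\ref{normal, unitary, self-adjoint}(1) shows that $C_\phi$ is normal, and since every normal operator on a separable Hilbert space is complex symmetric (a standard fact, see \cite{Garc2}), $C_\phi$ is complex symmetric. Nothing further is needed here.

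For the ``only if'' direction I would argue the contrapositive: if $a\neq 1$ and $\mathrm{Re}(b)>0$, so that $\phi$ is a hyperbolic non-automorphism of type $\mathrm{I}$ or type $\mathrm{II}$, then $C_\phi$ is not complex symmetric. The structural fact to invoke is that if $T$ is $C$-symmetric for a conjugation $C$, then $CT=T^*C$ together with $C^2=I$ gives $CTC=T^*$, so $C$ carries the orbit of any vector under $T$ onto the orbit of its image under $T^*$; because $C$ is a conjugate-linear isometric bijection, it follows that $T$ is cyclic if and only if $T^*$ is cyclic (indeed $C$ maps cyclic vectors of $T$ bijectively onto cyclic vectors of $T^*$). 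I combine this with Proposition~\ref{LFT Adjoint}, which yields $C_\phi^*=a^{-1}C_\psi$ where $\psi$ is of the \emph{opposite} type (type $\mathrm{II}$ when $\phi$ is type $\mathrm{I}$ and conversely, since $a^{-1}>1$ resp. $a^{-1}\in(0,1)$ and $\mathrm{Re}(a^{-1}\bar b)=a^{-1}\mathrm{Re}(b)>0$), and with the elementary remark that multiplication by a nonzero scalar preserves both cyclicity and non-cyclicity, as $(a^{-1}C_\psi)^n f$ and $C_\psi^n f$ span the same linear subspace for every $f$.

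The contradiction is then drawn in each case. If $\phi$ is of type $\mathrm{I}$, Proposition~\ref{type I cyclicity} says $C_\phi$ is not cyclic, whereas $C_\phi^*=a^{-1}C_\psi$ with $\psi$ of type $\mathrm{II}$ is cyclic by Corollary~\ref{type II cyclicity}; were $C_\phi$ complex symmetric, cyclicity of $C_\phi^*$ would force $C_\phi$ to be cyclic, a contradiction. If $\phi$ is of type $\mathrm{II}$, then $C_\phi$ is cyclic by Corollary~\ref{type II cyclicity} while $C_\phi^*=a^{-1}C_\psi$ with $\psi$ of type $\mathrm{I}$ is not cyclic by Proposition~\ref{type I cyclicity}; complex symmetry of $C_\phi$ would again force $C_\phi^*$ to be cyclic, a contradiction. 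Hence no hyperbolic non-automorphism induces a complex symmetric $C_\phi$, and together with the first paragraph this gives the asserted characterization, which moreover coincides with the normality condition of Theorem~\ref{normal, unitary, self-adjoint}(1).

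I do not expect a genuine obstacle: the substantive content, namely the splitting of cyclicity behaviour between types $\mathrm{I}$ and $\mathrm{II}$, is already packaged in Proposition~\ref{type I cyclicity} and Corollary~\ref{type II cyclicity}, and the adjoint identity is Proposition~\ref{LFT Adjoint}. The only points needing a careful word are the two soft facts used above --- that $C$-symmetry transports cyclic vectors of $T$ to cyclic vectors of $T^*$, and that cyclicity is invariant under nonzero scalar multiplication --- both of which are routine. It is perhaps worth remarking explicitly that the conclusion is mildly surprising, in that on $H^2(\mathbb{C}_+)$ complex symmetry of an affine $C_\phi$ collapses to normality, unlike the situation on $H^2(\mathbb{D})$.
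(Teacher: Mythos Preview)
Your proposal is correct and follows essentially the same approach as the paper: normality gives the ``if'' direction, and for the ``only if'' direction you use Proposition~\ref{LFT Adjoint} together with the cyclicity dichotomy of Proposition~\ref{type I cyclicity} and Corollary~\ref{type II cyclicity}, exploiting that complex symmetry forces $T$ and $T^*$ to be simultaneously cyclic. The only difference is cosmetic: you treat the type~$\mathrm{I}$ and type~$\mathrm{II}$ cases separately and spell out the two soft facts (conjugation transports cyclic vectors, nonzero scalars preserve cyclicity), whereas the paper packages this into a single sentence before stating the theorem.
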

\section{Cyclicity of $C_\phi$ }
The goal of this section is completely to characterize the cyclic $C_\phi$ on $H^2(\mathbb{C}_+)$. We saw in the previous section that hyperbolic non-automorphisms $\phi$ of type $\mathrm{I}$ and type $\mathrm{II}$ induce non-cyclic and cyclic $C_\phi$ respectively.  In contrast, the next result shows that \emph{all} parabolic non-automorphisms $\phi$ induce cyclic $C_\phi$ on $H^2(\mathbb{C}_+)$.

\begin{prop}\label{Parabolic cyclicity}Let $\phi(w)=w+b$ with $\mathrm{Re}(b)>0$. Then $C_{\phi}$ is cyclic on $H^2(\mathbb{C}_+).$
\end{prop}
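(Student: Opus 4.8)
The plan is to produce an explicit cyclic vector, and the natural candidate is a reproducing kernel $k_\alpha$ with $\alpha\in\mathbb{C}_+$ (say $\alpha=1$). The point is that because $\phi(w)=w+b$ is a translation, the orbit of $k_\alpha$ stays inside the kernel family: a one-line computation (or Proposition~\ref{LFT Adjoint} together with \eqref{eq: adjoint on kernel}, recalling $C_\phi^*=C_\psi$ with $\psi(w)=w+\bar b$) gives
\[
(C_\phi k_\alpha)(w)=\frac{1}{(w+b)+\overline{\alpha}}=\frac{1}{w+\overline{\alpha+\bar b}}=k_{\alpha+\bar b}(w),
\]
and hence by induction $C_\phi^{n}k_\alpha=k_{\alpha+n\bar b}$ for all $n\ge 0$, each parameter lying in $\mathbb{C}_+$ since $\mathrm{Re}(\alpha+n\bar b)=\mathrm{Re}(\alpha)+n\,\mathrm{Re}(b)>0$. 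Thus the linear span of the orbit $(C_\phi^{n}k_\alpha)_{n}$ equals the span of $\{k_{\alpha+n\bar b}:n\ge 0\}$, and by $\langle f,k_\beta\rangle=f(\beta)$ this span is dense in $H^2(\mathbb{C}_+)$ exactly when the arithmetic progression $\Lambda:=\{\alpha+n\bar b:n\ge 0\}$ is a set of uniqueness for $H^2(\mathbb{C}_+)$, i.e. no nonzero $f\in H^2(\mathbb{C}_+)$ vanishes on all of $\Lambda$.

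The whole proof therefore reduces to showing that $\Lambda$ is a uniqueness set, which I expect to be the only substantive step. Suppose $f\in H^2(\mathbb{C}_+)$ vanishes on $\Lambda$. Transporting $f$ to the disk via the Cayley map $\gamma(z)=\frac{1+z}{1-z}$ yields a function in $H^2(\mathbb{D})$ vanishing at each $\gamma^{-1}(\lambda)$, $\lambda\in\Lambda$ (the unitary identification differs from $f\mapsto f\circ\gamma$ only by a zero-free multiplier), so its zero set must satisfy the Blaschke condition $\sum_n\bigl(1-|\gamma^{-1}(\alpha+n\bar b)|\bigr)<\infty$. Writing $z_n=\alpha+n\bar b$ and using
\[
1-\left|\frac{z_n-1}{z_n+1}\right|^{2}=\frac{4\,\mathrm{Re}(z_n)}{|z_n+1|^{2}},
\]
one has $1-|\gamma^{-1}(z_n)|$ of order $\mathrm{Re}(z_n)/|z_n+1|^{2}\sim \mathrm{Re}(b)/(|b|^{2}n)$ as $n\to\infty$, so $\sum_n\bigl(1-|\gamma^{-1}(z_n)|\bigr)=\infty$ because $\mathrm{Re}(b)>0$. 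This contradiction forces $f\equiv 0$, so $\Lambda$ is a uniqueness set. (Equivalently, invoke the half-plane Blaschke condition directly: a sequence $\{z_n\}\subset\mathbb{C}_+$ with $\sum_n\mathrm{Re}(z_n)/(1+|z_n|^{2})=\infty$ is a uniqueness set for $H^2(\mathbb{C}_+)$.)

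Combining the two steps, the span of the orbit of $k_\alpha$ is dense, so $k_\alpha$ is a cyclic vector for $C_\phi$ and $C_\phi$ is cyclic. The contrast with the hyperbolic type $\mathrm{II}$ case treated above is worth noting: a parabolic non-automorphism has no fixed point in $\mathbb{C}_+$, so the Denjoy--Wolff argument used there is unavailable; here it is instead the translation structure that keeps the orbit of $k_\alpha$ within the kernel family, reducing cyclicity to the failure of a Blaschke sum along an arithmetic progression. That uniqueness-set fact is the main obstacle, and it is precisely the content of the middle paragraph.
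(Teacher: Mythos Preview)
Your proof is correct and follows essentially the same route as the paper: the paper also takes $k_1$ as the candidate cyclic vector, observes that $C_\phi^{n}k_1=k_{1+n\bar b}$, and then shows that no nonzero $f\in H^2(\mathbb{C}_+)$ can vanish on $\{1+n\bar b\}_{n\ge0}$ by verifying that the half-plane Blaschke sum $\sum_n \mathrm{Re}(b_n)/(1+|b_n|^2)$ diverges like $\sum 1/n$. The only cosmetic difference is that the paper works directly with the half-plane Blaschke condition rather than first passing through the Cayley transform.
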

\begin{proof} 
First note that the compositional iterates of $\phi$ are given by
$
\phi^{[n]}(w)=w+nb.
$
This implies that we have
\[
(C_{\phi}^nk_1)(w)=k_1(\phi^{[n]}(w))=k_1(w+nb)=\frac{1}{1+nb+w}=k_{b_n}(w)
\]
where $b_n=1+n\overline{b}$. Now if we assume some $f\in H^2(\mathbb{C}_+)$ is orthogonal to the span of the orbit $(C_\phi^n k_1)_{n\in\mathbb{N}}$, then
\begin{align*}
0=\left\langle f,C_{\phi}^nk_1\right\rangle =\left\langle f,k_{b_n}\right\rangle=f(b_n).
\end{align*}
To conclude the proof, it is enough to show that the sequence $(b_n)_{n\in\mathbb{N}}$ does not satisfy the \emph{Blaschke condition} for zeros of $H^2(\mathbb{C}_+)$ functions (see \cite[p. 53]{Garnett}). That is, we must prove that
\begin{equation}\label{Blaschke violates}
\sum_{n=1}^{\infty}\frac{\mathrm{Re}(b_n)}{1+\left| b_n\right|^2} =\infty.
\end{equation}
First note that 
\[
1+ \left| b_n\right|^2=1+(1+n\mathrm{Re}(b))^2+(n\mathrm{Im}(b))^2\leq 2(1+n\left| b\right|) ^2\leq 2(1+\left| b\right|)^2n^2
\]
which implies that
\[
\frac{\mathrm{Re}(b_n)}{1+\left| b_n\right|^2}\geq \frac{1+n\mathrm{Re}(b)}{2(1+\left| b\right|)^2n^2}\geq \frac{\mathrm{Re}(b)}{2(1+\left| b\right|)^2}\frac{1}{n}.
\]
Therefore \eqref{Blaschke violates} clearly holds and
hence $(b_n)_{n\in\mathbb{N}}$ cannot be a zero sequence for $f$ unless $f\equiv 0$.
\end{proof}

Hence the only case remaining is the cyclicity of $C_\phi$ where $\phi(w)=aw+b$ with $a>0$ and Re$(b)= 0$, that is precisely when $\phi$ is an automorphism of $\mathbb{C}_+$. This will be achieved with the help of the following result about the \emph{non-cyclicity} of certain multiplication operators on $L^2$ spaces of the real line. The idea of the proof is inspired by that of \cite[Theorem 3.13]{Eva role of spectrum}.

\begin{lem}\label{Non-cyclity of M on L^2} Suppose $s\in\mathbb{R}$ and let $M:=M_{e^{ist}}$ be the operator of multiplication by $e^{ist}$ on $L^2(\mathbb{R}^+,dt)$ or $L^2(\mathbb{R},dt)$. Then $M$ is not cyclic on either space.
	\end{lem}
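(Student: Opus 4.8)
The plan is to show non-cyclicity of $M = M_{e^{ist}}$ by exhibiting, for any candidate cyclic vector $f \in L^2$, a nonzero functional that annihilates the closed linear span of $\{M^n f : n \geq 0\} = \{e^{isnt} f(t) : n \geq 0\}$. The key observation is that multiplication by $e^{isnt}$ only shifts mass around in a way governed by the single frequency $s$; if $s = 0$ then $M = I$ and every orbit spans a one-dimensional subspace, so non-cyclicity is immediate (assuming the space has dimension $>1$, which it does). So assume $s \neq 0$.

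For $s \neq 0$ the natural move is to pass to the Fourier/Plancherel side. On $L^2(\mathbb{R})$, conjugating $M_{e^{ist}}$ by the Fourier transform turns it into the translation operator $(\tau_s g)(\xi) = g(\xi - s)$ (up to the normalization convention), and the span of $\{\tau_s^n \hat f\}_{n\geq 0}$ consists of functions supported, in an appropriate sense, on translates $\supp(\hat f) + s\mathbb{N}$. One then picks a point $\xi_0$ such that $\xi_0 - sn \notin \supp(\hat f)$ for all $n \geq 0$ — concretely, take any $\xi_0$ lying "to the left" (if $s>0$) of the essential infimum of the support of $\hat f$, or more robustly choose a small interval $J$ disjoint from all the translates $\supp(\hat f)+s\mathbb N$; such a $J$ exists because the translates only march off in one direction. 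Any $g \in L^2$ supported on $J$ is then orthogonal to every $\tau_s^n \hat f$, and pulling back through the Fourier transform gives a nonzero vector orthogonal to the orbit of $f$. Hence $f$ is not cyclic, and since $f$ was arbitrary, $M$ is not cyclic on $L^2(\mathbb{R})$. For $L^2(\mathbb{R}^+, dt)$ one argues similarly, either by viewing it as the subspace of $L^2(\mathbb{R})$ of functions vanishing on $(-\infty,0)$ (which is $M$-invariant) and noting that $\widehat{L^2(\mathbb{R}^+)}$ is the Hardy space of the half-plane, whose elements have Fourier supports with a one-sided structure compatible with the same argument, or more directly by the Lax–Beurling / inner-outer description: multiplication by $e^{ist}$ is (conjugate to) a shift on a Hardy-type space, and shifts of finite order in one direction cannot be cyclic because the orbit of any $f$ lies in a proper invariant subspace.

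Alternatively — and this is probably the cleaner route to write up, mirroring the cited \cite[Theorem 3.13]{Eva role of spectrum} — one avoids Fourier analysis and works directly: a vector $h$ is orthogonal to the orbit $\{e^{isnt}f\}_{n\geq 0}$ iff $\int e^{isnt} f(t)\overline{h(t)}\,dt = 0$ for all $n\geq 0$, i.e. the function $F(t) := f(t)\overline{h(t)} \in L^1$ has all its "$s$-periodic Fourier coefficients'' (the Fourier coefficients of its periodization with period $2\pi/|s|$) vanishing for nonnegative indices. Since we get to choose $h$, we pick $h = f \cdot u$ where $u$ is a bounded function designed so that the periodization of $|f|^2 u$ is a nonzero $L^1$ function on the circle whose Fourier coefficients with index $\geq 0$ all vanish — e.g. take its Fourier series to be supported on negative indices only; such functions exist (any nonzero function in the "conjugate Hardy space'' $\overline{H^1_0}$ of the circle), and one checks $|f|^2 u \in L^1$ and $h \in L^2$. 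This directly produces the annihilating vector.

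The main obstacle is the $L^2(\mathbb{R}^+)$ case: on the whole line the translation/periodization picture is transparent, but on the half-line one must be careful that the construction of the annihilating vector $h$ respects the constraint $\supp h \subseteq \mathbb{R}^+$, so that $h$ genuinely lies in $L^2(\mathbb{R}^+)$ — one cannot simply borrow an $h$ from $L^2(\mathbb{R})$. I expect to handle this by noting that the support condition is automatically satisfied in the direct construction $h = f\cdot u$ (since $u$ is just a bounded multiplier and $f$ already lives on $\mathbb{R}^+$), so the periodization argument, applied with the period interval sitting inside where things are controlled, goes through with only cosmetic changes; the only real check is that one can still arrange the periodization of $|f|^2 u$ to be a nonzero $\overline{H^1_0}$ function given that $|f|^2$, restricted to any period, is an arbitrary nonnegative $L^1$ function — and that is again just the existence of nonvanishing outer-type functions, which forces a nonzero choice of $u$.
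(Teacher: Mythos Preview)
Both routes you sketch have real gaps. In the Fourier-transform approach, the claim that one can find an interval $J$ disjoint from $\bigcup_{n\ge 0}\bigl(\supp\hat f + sn\bigr)$ fails whenever $\hat f$ has full support---take $f$ a Gaussian---which is the generic case; that the translates march off in one direction is irrelevant when each of them already equals all of $\mathbb{R}$. In the periodization approach, once you set $h=f\cdot u$ with $u\in L^\infty$ and $T=2\pi/|s|$, the periodization of $|f|^2\bar u$ is dominated in modulus by $\|u\|_\infty\, w$, where $w(t)=\sum_{k\ge 0}|f(t+kT)|^2$, and you need it to equal a nonzero element of an $H^1_0$-type space on $\mathbb{T}$. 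But every nonzero $g\in H^1(\mathbb{T})$ satisfies $\log|g|\in L^1(\mathbb{T})$, so this forces $\log w\in L^1(\mathbb{T})$---and that can fail: with $f(t)=e^{-t}\exp\bigl(-1/\mathrm{dist}(t,\,t_0+T\mathbb{Z})\bigr)$ for some $t_0\in(0,T)$ one gets $w(t)=\frac{e^{-2t}}{1-e^{-2T}}\exp\bigl(-2/\mathrm{dist}(t,t_0+T\mathbb Z)\bigr)$ on $[0,T)$, and $\log w$ is not integrable near $t_0$. So the appeal to ``nonvanishing outer-type functions'' does not go through.

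The paper uses the same underlying observation (polynomials in $e^{ist}$ are $T$-periodic) but far more directly. Since $L^2(\mathbb{R}^+)$ is reducing for $M$ on $L^2(\mathbb{R})$, it suffices to treat that case. If $f$ vanishes on a set of positive measure one is done trivially; otherwise one shows $\chi_{[0,1]}$ is not in the orbit closure: if $p_nf\to\chi_{[0,1]}$ in $L^2$ then along a subsequence $p_{n_k}\to 1/f$ a.e.\ on $[0,1]$ and $p_{n_k}\to 0$ a.e.\ on $(1,\infty)$, and $T$-periodicity of each $p_{n_k}$ then forces $p_{n_k}\to 0$ a.e.\ on $[0,1]$ as well---a contradiction. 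If you prefer to keep the annihilating-vector viewpoint, a version that sidesteps the $H^1$ obstruction is to exploit the decomposition $L^2(\mathbb{R}^+)\cong L^2[0,T)\otimes\ell^2(\mathbb{N})$ under which $M$ becomes $M_0\otimes I$ and hence has infinite multiplicity; concretely, $h(t)=\overline{f(t+T)}\,\chi_{[0,T)}(t)-\overline{f(t-T)}\,\chi_{[T,2T)}(t)$ is a nonzero vector orthogonal to $e^{isnt}f$ for every $n\in\mathbb{Z}$.
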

\begin{proof}If $s=0$ then $M=I$ is clearly non-cyclic. So assume $s\not = 0$. Since $L^2(\mathbb{R}^+,dt)$ is clearly a reducing subspace for $M$ acting on $L^2(\mathbb{R},dt)$, it is enough to prove the result for $L^2(\mathbb{R}^+,dt)$. Consider any function $f
	\in L^2(\mathbb{R}^+,dt)$. Then we have 
	\[
	\mathrm{span}\{M^nf:n\in\mathbb{N}\}=\{pf: \textrm{where p is a polynomial in} \ e^{ist}\}.
	\]
	First suppose that $f$ vanishes on a set $A\subset\mathbb{R}^+$ of positive measure. Then each $pf$ vanishes on $A$ and hence
	sequences of these $pf$ can approximate only functions that vanish almost everywhere on $A$. Therefore $M$ is not cyclic in this case. For the other case, suppose $f\not\equiv 0$ on any set of positive measure. That $M$ is non-cyclic will follow from the fact that any polynomial in $e^{ist}$ is $2\pi/s$ periodic. 
	
	Let $\chi_{[0,1]}$ be the characteristic function of $[0,1]$ and suppose $p_nf\to\chi_{[0,1]}$ in $L^2(\mathbb{R}^+,dt)$ for some sequence $p_n$ of polynomials in $e^{ist}$. Then some subsequence $p_{n_k}f\to\chi_{[0,1]}$ pointwise almost everywhere. So on the one hand $p_{n_k}\to1/f$ almost everywhere on $[0,1]$ and $p_{n_k}\to 0$ almost everywhere on $(1,\infty)$ since $f\not = 0$ almost everywhere. But on the other hand the periodicity of $p_{n_k}$ implies that we also have $p_{n_k}\to 0$ almost everywhere on $[0,1]$ and hence that $1/f=0$ almost everywhere on $[0,1]$. This contradiction proves that $M$ is not cyclic in this case also. 
	\end{proof}

We are now ready to complete the characterization of linear fractional cyclicity. 

\begin{thm}Let $\phi(w)=aw+b$ be a self-map of $\mathbb{C}_+$. Then $C_\phi$ is cyclic on $H^2(\mathbb{C}_+)$ if and only if $\phi$ is a parabolic non-automorphism or a hyperbolic non-automorphism of type $\mathrm{II}$. That is precisely when $a\geq1$ and $\mathrm{Re}(b)>0$. 
	\end{thm}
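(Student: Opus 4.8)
The implications needed for the ``if'' direction are already available: Proposition~\ref{Parabolic cyclicity} handles the parabolic non-automorphisms ($a=1$, $\mathrm{Re}(b)>0$) and Corollary~\ref{type II cyclicity} handles the hyperbolic non-automorphisms of type~$\mathrm{II}$ ($a>1$, $\mathrm{Re}(b)>0$), while for the ``only if'' direction Proposition~\ref{type I cyclicity} rules out the type~$\mathrm{I}$ symbols ($0<a<1$, $\mathrm{Re}(b)>0$). The only case that remains is $\mathrm{Re}(b)=0$, i.e.\ $\phi$ an automorphism of $\mathbb{C}_+$, and the plan is to show that every such $C_\phi$ fails to be cyclic by transporting it, through explicit unitaries, to an operator of the form $M_{e^{ist}}$ on $L^2(0,\infty)$ or $L^2(\mathbb{R})$ and appealing to Lemma~\ref{Non-cyclity of M on L^2}.

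The main device is the Paley--Wiener (Fourier--Laplace) transform $\mathcal{L}\colon L^2(0,\infty)\to H^2(\mathbb{C}_+)$, $(\mathcal{L}g)(w)=\int_0^\infty e^{-wt}g(t)\,dt$, which is a scalar multiple of a unitary. A change of variable in this integral shows that $\mathcal{L}^{-1}C_\phi\mathcal{L}$ is the operator $g(t)\mapsto a^{-1}e^{-bt/a}g(t/a)$ on $L^2(0,\infty)$. If $\phi$ is a parabolic automorphism then $a=1$ and $b$ is purely imaginary, so $\mathcal{L}^{-1}C_\phi\mathcal{L}$ is multiplication by the unimodular function $e^{-bt}=e^{ist}$ with $s=-\mathrm{Im}(b)\in\mathbb{R}$; Lemma~\ref{Non-cyclity of M on L^2} then shows $C_\phi$ is not cyclic, the value $s=0$ (i.e.\ $\phi=\mathrm{id}$) being the trivial sub-case $M=I$.

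If $\phi(w)=aw+b$ is a hyperbolic automorphism ($a\neq 1$, $b$ purely imaginary) I would first conjugate away the translation. Let $\tau(w)=w+\frac{b}{1-a}$; since $\frac{b}{1-a}$ is purely imaginary, $\tau$ is a parabolic automorphism of $\mathbb{C}_+$, so $C_\tau$ is unitary by Theorem~\ref{normal, unitary, self-adjoint}(3), and a direct computation gives $\tau^{-1}\circ\phi\circ\tau(w)=aw=:\tilde{\phi}(w)$. Hence $C_\phi$ is unitarily equivalent to $C_{\tilde{\phi}}$, and it suffices to treat $\tilde{\phi}$. Through $\mathcal{L}$, the operator $C_{\tilde{\phi}}$ becomes the scaled dilation $g(t)\mapsto a^{-1}g(t/a)$ on $L^2(0,\infty)$; the logarithmic substitution sending $g$ to $u\mapsto e^{u/2}g(e^u)$, a unitary $L^2(0,\infty)\to L^2(\mathbb{R})$, turns this into $a^{-1/2}$ times the translation $h\mapsto h(\cdot-\log a)$ on $L^2(\mathbb{R})$, and a Fourier transform converts that translation into multiplication by $e^{-i(\log a)\xi}$. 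Since cyclicity is insensitive to nonzero scalar factors, Lemma~\ref{Non-cyclity of M on L^2} (with $s=-\log a\neq 0$) shows $C_{\tilde{\phi}}$, hence $C_\phi$, is not cyclic. Assembling the four cases gives the stated characterization.

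I expect the one genuinely structural step to be the reduction in the hyperbolic case, namely recognizing that conjugation by the parabolic automorphism $\tau$ that fixes the finite boundary fixed point $\frac{b}{1-a}$ of $\phi$ is implemented by a \emph{unitary} composition operator and simultaneously normalizes $\phi$ to the pure dilation $w\mapsto aw$. Everything else is bookkeeping with the transforms: verifying the intertwining identities on a dense subspace and confirming that $\mathcal{L}$, the logarithmic change of variable, and the Fourier transform are each scalar multiples of unitaries, so that non-cyclicity genuinely descends from Lemma~\ref{Non-cyclity of M on L^2} all the way back to $C_\phi$.
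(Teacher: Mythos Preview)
Your argument is correct and follows the same overall strategy as the paper: the ``if'' direction and the type~$\mathrm{I}$ case are handled by the earlier results, and for the automorphism case you reduce $C_\phi$ to a multiplication operator $M_{e^{ist}}$ on $L^2(\mathbb{R}^+)$ or $L^2(\mathbb{R})$ and invoke Lemma~\ref{Non-cyclity of M on L^2}. The only real difference is in how the similarity is obtained: the paper simply quotes \cite[Theorem~7.1]{Eva Adjoints Dirichlet} (after passing to the upper half-plane via $(Uf)(w)=f(iw)$) to assert that $C_\phi$ is similar to $M_{e^{-i\mathrm{Im}(b)t}}$ on $L^2(\mathbb{R}^+)$ in the parabolic case and to $a^{-1/2}M_{e^{-it\log a}}$ on $L^2(\mathbb{R})$ in the hyperbolic case, whereas you build these similarities by hand via the Paley--Wiener transform, and in the hyperbolic case add the pleasant preliminary step of conjugating by the unitary $C_\tau$ with $\tau(w)=w+\tfrac{b}{1-a}$ to normalize $\phi$ to the pure dilation $w\mapsto aw$ before applying the Mellin/Fourier chain. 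Your route is thus more self-contained; the paper's is shorter because it outsources exactly this computation. One small point of care: the conjugation identity for composition operators reads $C_{\sigma}^{-1}C_\phi C_{\sigma}=C_{\sigma\circ\phi\circ\sigma^{-1}}$, so to land on $\tilde\phi=\tau^{-1}\circ\phi\circ\tau$ you should conjugate by $C_{\tau^{-1}}$ rather than $C_\tau$; since $\tau^{-1}$ is also a parabolic automorphism this changes nothing substantive.
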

\begin{proof}Since the only case that remains is when $\phi$ is an automorphism, we may assume $\mathrm{Re}(b)=0$. We first note that if $\Pi$ denotes the upper half-plane, then $(Uf)(w)=f(iw)$ defines a unitary map of the Hardy space $H^2(\Pi)$ of the upper half-plane onto $H^2(\mathbb{C}_+)$ and $C_\phi$ on $H^2(\mathbb{C}_+)$ is unitarily equivalent to $C_\psi$ on $H^2(\Pi)$ where $\psi(w)=aw+ib$ is a self map of $\Pi$. It follows from	Gallardo-Guti\'{e}rrez and Montes-Rodr\'{i}guez \cite[Theorem 7.1]{Eva Adjoints Dirichlet} that if $a=1$ then $C_\psi$ is similar to $M_{e^{-bt}}=M_{e^{-i\mathrm{Im}(b)t}}$ on $L^2(\mathbb{R}^+,dt)$, and hence so is $C_\phi$. Therefore Lemma \ref{Non-cyclity of M on L^2} shows that $C_\phi$ is not cyclic when $\phi$ is a parabolic automorphism. Similarly, when $\phi$ is a hyperbolic automorphism ($a\neq 1$),  then $C_\phi$ is similar to $M_{a^{-it-1/2}}=a^{-1/2}M_{e^{-it\log a}}$ on $L^2(\mathbb{R},dt)$ which is again not cyclic by Lemma \ref{Non-cyclity of M on L^2}. This completes the proof of the theorem.\end{proof}

\section{Hypercyclicity of $C_\phi$}
Finally we show that $H^2(\mathbb{C_+})$ does \emph{not} support any hypercyclic composition operator with affine and therefore linear fractional symbol. This is in sharp contrast to various weighted Hardy spaces of the open unit disk (see \cite[page 8]{Eva role of spectrum}).

\begin{thm} $C_\phi$ is not hypercyclic on $H^2(\mathbb{C_+})$ for any affine symbol $\phi$.
	\end{thm}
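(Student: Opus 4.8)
The plan is to dispose of the affine symbols $\phi(w)=aw+b$ (with $a>0$ and $\mathrm{Re}(b)\ge 0$) in three groups, according to whether $a\ge 1$, or $a<1$ with $\mathrm{Re}(b)>0$, or $a<1$ with $\mathrm{Re}(b)=0$; these clearly exhaust all affine self-maps of $\mathbb{C}_+$. In each group I would reduce non-hypercyclicity to something already at hand, using three elementary principles: an operator that is power bounded is not hypercyclic (its orbits stay inside a ball, while $H^2(\mathbb{C}_+)$ is infinite dimensional); a hypercyclic operator is necessarily cyclic; and hypercyclicity is invariant under similarity. I would also use repeatedly the norm formula $\|C_\phi\|=\sqrt{\phi'(\infty)}=a^{-1/2}$.

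When $a\ge 1$, this formula gives $\|C_\phi\|=a^{-1/2}\le 1$, so $C_\phi$ is a contraction and hence power bounded; therefore it is not hypercyclic. This settles at once all parabolic symbols and all hyperbolic non-automorphisms of type $\mathrm{II}$. When $a<1$ and $\mathrm{Re}(b)>0$ --- that is, $\phi$ is a hyperbolic non-automorphism of type $\mathrm{I}$ --- I would simply cite Proposition~\ref{type I cyclicity}, which says that such $C_\phi$ is not cyclic and hence a fortiori not hypercyclic. (A self-contained alternative for this subcase: $\phi$ fixes the point $\alpha_0=b/(1-a)\in\mathbb{C}_+$, so by \eqref{eq: adjoint on kernel} one has $C_\phi^*k_{\alpha_0}=k_{\alpha_0}$; then for every $f$ the numbers $\langle C_\phi^n f,k_{\alpha_0}\rangle=\langle f,k_{\alpha_0}\rangle$ are constant in $n$, and since $f\mapsto\langle f,k_{\alpha_0}\rangle$ is a nonzero continuous functional no orbit can be dense.)

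The remaining case, $a<1$ and $\mathrm{Re}(b)=0$ with $\phi$ a hyperbolic automorphism, is where the real work lies and is the main obstacle, since neither of the previous shortcuts applies: here $\|C_\phi^n\|=a^{-n/2}\to\infty$. For this I would invoke the model established in Section~3 (following \cite[Theorem 7.1]{Eva Adjoints Dirichlet}): $C_\phi$ is similar to $a^{-1/2}M_{e^{-it\log a}}$ on $L^2(\mathbb{R},dt)$, that is, to a scalar multiple $cU$ of a unitary operator with $c=a^{-1/2}>1$. Since $\|(cU)^n f\|=c^n\|f\|$, the norms of the vectors along any orbit of $cU$ form a geometric sequence and so are not dense in $[0,\infty)$; hence no orbit of $cU$ is dense, $cU$ is not hypercyclic, and by similarity invariance neither is $C_\phi$. (Equivalently, $\sigma(C_\phi)$ is the circle $\{\,|z|=a^{-1/2}\,\}$, disjoint from the unit circle $\partial\mathbb{D}$ because $a<1$, and one may apply Kitai's theorem that every connected component of the spectrum of a hypercyclic operator meets $\partial\mathbb{D}$; see \cite{Bayart-Matheron}.) Putting the three cases together proves the theorem.
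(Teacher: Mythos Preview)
Your argument is correct, but the decomposition and the handling of the ``hard'' case differ from the paper's. The paper groups the cases by \emph{normality}: whenever $\phi$ is an automorphism or a parabolic non-automorphism (equivalently, $a=1$ or $\mathrm{Re}(b)=0$), $C_\phi$ is normal by Theorem~\ref{normal, unitary, self-adjoint}, and normal operators are never hypercyclic; the type~$\mathrm{I}$ case is dispatched via non-cyclicity (as you do), and the only remaining case, type~$\mathrm{II}$, is handled by the norm computation $\|C_\phi^n\|=a^{-n/2}\to 0$. In particular, the hyperbolic automorphism with $a<1$ --- which you single out as ``where the real work lies'' --- is in the paper disposed of in one line via normality, with no need for the $L^2(\mathbb{R})$ model, the geometric-norm argument, or Kitai's theorem. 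Your route is perfectly valid and the $cU$ argument (or the spectral alternative) is a nice, model-based way to see it, but the paper's observation that normality already covers every automorphism is both shorter and avoids importing the Gallardo-Guti\'{e}rrez--Montes-Rodr\'{i}guez similarity. Conversely, your contraction argument for $a\ge 1$ is slightly slicker than the paper's for type~$\mathrm{II}$, since it does not require computing the iterates $\phi^{[n]}$.
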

\begin{proof} 
	If $\phi$ is an automorphism or a parabolic non-automorphism, then $C_\phi$ is normal and hence is not hypercyclic (see \cite[Theorem 5.30]{Linear Chaos}). Similarly if $\phi$ is a hyperbolic non-automorphism of type $\mathrm{I}$, then $C_\phi$ is not cyclic by Proposition \ref{type I cyclicity} and hence is not hypercyclic either. The case that remains is when $\phi$ is hyperbolic non-automorphic of type $\mathrm{II}$, that is $\phi(w)=aw+b$ where $a>1$ and $\mathrm{Re}(b)>0$. By induction, it is easy to show that the $n$-th iterate of $\phi$ is given by
	\[
	\phi^{[n]}(w)=a^nw+\frac{(1-a^{n})b}{1-a}.
	\]
	 Then $|| C_{\phi}^n||_{H^2(\mathbb{C}_+)}=\sqrt{{\phi^{[n]}}'(\infty)}=\sqrt{1/a^n}$ (see \eqref{Ang. Der. at infinity} and \cite{Eliot Jury}). Since $a\in (1,\infty)$, the sequence $|| C_{\phi}^n||_{H^2(\mathbb{C}_+)}\to 0$ as $n\to\infty$. This implies that $C_{\phi}$ is not hypercyclic.
\end{proof}
\section*{Acknowledgement}
	This work constitutes a part of the doctoral dissertation of the second author. The first author is partially supported by a FAPESP grant (17/09333-3).  Both authors thank the anonymous referee for the many comments and suggestions to improve this article.
\bibliographystyle{amsplain}

\end{document}